\numberwithin{equation}{section} 
\newtheorem{theorem}{Theorem}[section]
\newtheorem{proposition}[theorem]{Proposition}
\newtheorem{lemma}[theorem]{Lemma}
\def\irr#1{{\rm Irr}(#1)}
\def\irrr#1#2 {\irr {#1 \mid #2}}
\providecommand{\abs}[1]{\lvert#1\rvert}
\providecommand{\norm}[1]{\left\lVert#1\right\rVert}
\newcommand{\R}{\mathbb R}
\newcommand{\1}{\mathbf 1}
\newcommand{\E}{\mathbb E}
\begin{document}

\title[Sharp bounds for marginal densities]{On sharp bounds for
  marginal densities of product measures} 

\author[G. Livshyts, G. Paouris, P. Pivovarov]{Galyna Livshyts,
  Grigoris Paouris, Peter Pivovarov}

\begin{abstract}
  We discuss optimal constants in a recent result of Rudelson and
  Vershynin on marginal densities.  We show that if $f$ is a
  probability density on $\R^n$ of the form $f(x)=\prod_{i=1}^n
  f_i(x_i)$, where each $f_i$ is a density on $\R$, say bounded by
  one, then the density of any marginal $\pi_E(f)$ is bounded by
  $2^{k/2}$, where $k$ is the dimension of $E$. The proof relies on an
  adaptation of Ball's approach to cube slicing, carried out for
  functions.  Motivated by inequalities for dual affine
  quermassintegrals, we also prove an isoperimetric inequality for
  certain averages of the marginals of such $f$ for which the cube is
  the extremal case.
\end{abstract}
\maketitle

\section{Introduction}
In this note we present an alternate approach to a recent theorem of
Rudelson and Vershynin on marginal densities of product measures
\cite{RV_IMRN}. To fix the notation, if $f$ is a probability density
on Euclidean space $\R^n$ and $E$ is a subspace, the marginal density
of $f$ on $E$ is defined by
$$ \pi_E(f)(x) = \int_{E^{\perp}+x}f(y)dy \quad (x\in E).
$$ In \cite{RV_IMRN}, it is proved that if $f(x) = \prod_{i=1}^n f_i(x_i)$,
where each $f_i$ is a density on $\R$, bounded by $1$, then for any
$k\in \{1,\ldots,n-1\}$, and any subspace $E$ of dimension $k$,
\begin{equation}
  \label{eqn:RV}
  \norm{\pi_E(f)}^{1/k}_{L^{\infty}(E)}\leq C,
\end{equation}
where $C$ is an absolute constant.  

In \cite{RV_IMRN}, it is pointed out that when $k=1$, the constant $C$
in \eqref{eqn:RV} may be taken to be $\sqrt{2}$.  This follows from a
theorem of Rogozin \cite{Rogozin_TVP}, which reduces the problem to
$f=\mathds{1}_{Q_n}$ where $Q_n=[-1/2,1/2]^n$ is the unit cube,
together with Ball's theorem \cite{Ball_PAMS}, \cite{Ball_GAFA} on
slices of $Q_n$.  More precisely, one can formulate Rogozin's Theorem
as follows: if $\theta$ is a unit vector with linear span $[\theta]$,
then
\begin{equation}
  \label{eqn:Rogozin}
  \norm{\pi_{[\theta]}(f)}_{L^{\infty}([\theta])} \leq
  \norm{\pi_{[\theta]}(\mathds{1}_{Q_n})}_{L^{\infty}([\theta])}
\end{equation}for any $f$ in the class
\begin{equation*}
  \mathcal{F}_n=\left\{f(x) =\prod_{i=1}^nf_i(x_i):
  \norm{f_i}_{L^{\infty}(\R)}\leq 1 = \norm{f_i}_{L^{1}(\R)},\;
  i=1,\ldots,n \right\}.
\end{equation*}
By definition of the marginal density and the Brunn-Minkowski
inequality, 
\begin{eqnarray*}
  \norm{\pi_{[\theta]}(\mathds{1}_{Q_n})}_{L^{\infty}([\theta])}& =&
  \max_{x\in [\theta]} \abs{Q_n\cap ({\theta}^{\perp}+x)}_{n-1} \\ &
  = &\abs{Q_n\cap{\theta}^{\perp}}_{n-1},
\end{eqnarray*} 
where $\abs{\cdot}_{n-1}$ denotes $(n-1)$-dimensional Lebesgue
measure.  Ball's theorem gives $\abs{Q_n\cap \theta^{\perp}}_{n-1}\leq
\sqrt{2}$, which shows $C=\sqrt{2}$ works in \eqref{eqn:RV}.

Since Ball's theorem holds in higher dimensions, i.e.,
\begin{equation}
\label{eqn:Ball_intro}
\max_{E\in G_{n,k}}\abs{Q_n\cap E^{\perp}}_{n-k}^{1/k}\leq \sqrt{2}\quad (k\geq 1),
\end{equation}
where $G_{n,k}$ is the Grassmannian of all $k$-dimensional subspaces
of $\R^n$, it is natural to expect that $C=\sqrt{2}$ works in
\eqref{eqn:RV} for all $k>1$. However, in the absence of a
multi-dimensional analogue of Rogozin's result (\ref{eqn:Rogozin}),
the authors of \cite{RV_IMRN} prove \eqref{eqn:RV} with an absolute
constant $C$ via different means.

Our goal is to show that one can determine the optimal $C$ for
suitable $k>1$ directly by adapting Ball's arguments giving
\eqref{eqn:Ball_intro}, and a related estimate, to the functional
setting. The main result of this paper is the following theorem.

\begin{theorem}
  \label{main}
    Let $1\leq k< n$ and $E\in G_{n,k}$.  Then there exists a
    collection of numbers $\{\gamma_i\}_{i=1}^n\subset [0,1]$ with
    $\sum_{i=1}^n \gamma_i=k$ such that for any bounded functions
    $f_1,\ldots,f_n:\R\rightarrow [0,\infty)$ with
      $\norm{f_i}_{L^1(\R)}=1$ for $i=1,\ldots,n$, the product
      $f(x)=\prod_{i=1}^n f_i(x_i)$ satisfies
    \begin{equation}
      \norm{\pi_E(f)}_{L^{\infty}(E)}\leq
      \min\left(\left(\frac{n}{n-k}\right)^{\frac{n-k}{2}},2^{k/2}\right)
       \prod_{i=1}^n \norm{f_i}^{\gamma_i}_{L^{\infty}(\R)}.
      \end{equation}
\end{theorem}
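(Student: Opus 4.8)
The plan is to adapt Ball's Fourier-analytic/Brascamp–Lieb approach to cube slicing, replacing the indicator of the cube by the product density $f$. First I would reduce to bounding $\pi_E(f)$ at the origin: since $E$ is a fixed subspace, after translating each $f_i$ we may assume the maximum of $\pi_E(f)$ on $E$ is attained at $0$, so it suffices to estimate $\pi_E(f)(0)=\int_{E^\perp}f(y)\,dy$. Next I would express this integral via the Fourier transform. Writing $P_{E^\perp}$ for orthogonal projection onto $E^\perp$, one has, by Fourier inversion applied in the $E$-variables,
\begin{equation}
  \pi_E(f)(0)=\int_{E^\perp}\prod_{i=1}^n f_i(y_i)\,dy
  =\int_{E}\prod_{i=1}^n \widehat{f_i}(\langle \xi,e_i\rangle)\,d\xi,
\end{equation}
where $\{e_i\}$ is the standard basis and $\widehat{f_i}$ denotes the (suitably normalized) one-dimensional Fourier transform; the normalization $\|f_i\|_{L^1}=1$ gives $\|\widehat{f_i}\|_\infty\le 1$. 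The key analytic input is the scalar Ball-type inequality: for a density $g$ on $\R$ with $\|g\|_{L^1}=1$ and $\|g\|_\infty\le M$, one has the bound $\int_\R |\widehat g(t)|^p\,dt \le c_p M^{p-1}$ for $p>1$ (this is precisely the one-dimensional estimate underlying \eqref{eqn:Rogozin} and Ball's cube-slicing proof, now stated for functions rather than indicators). One then records the two relevant values of the constant: the bound coming from the $L^2$ computation (yielding the $(n/(n-k))^{(n-k)/2}$ factor) and the bound coming from Ball's optimal integral inequality $\int_\R |\mathrm{sinc}(t)|^p\,dt$ analysis (yielding $2^{k/2}$).

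The heart of the argument is the multidimensional Brascamp–Lieb (or equivalently, Ball's geometric form of it) inequality applied to the vectors $\{P_{E^\perp}e_i\}_{i=1}^n$, which span $E^\perp$. Since each $e_i$ is a unit vector, the projections $u_i:=P_{E^\perp}e_i$ satisfy $\sum_{i=1}^n u_i\otimes u_i = \mathrm{Id}_{E^\perp}$ (a John-type decomposition of the identity coming from the trivial decomposition $\sum e_i\otimes e_i=\mathrm{Id}_{\R^n}$ projected to $E^\perp$), with $\sum_{i=1}^n |u_i|^2 = \dim E^\perp = n-k$. Setting $\gamma_i:=|u_i|^2$ when $u_i\neq 0$, one gets exactly $\sum\gamma_i=n-k$ on $E^\perp$; the complementary choice $\sum\gamma_i=k$ claimed in the theorem comes from running the same argument for the identity on $E$ with vectors $P_E e_i$, or equivalently rescaling, so that $\gamma_i=|P_E e_i|^2\in[0,1]$ and $\sum\gamma_i=k$. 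Applying the geometric Brascamp–Lieb inequality along the decomposition of $\mathrm{Id}_{E^\perp}$ with exponents governed by the $\gamma_i$, and then plugging in the one-dimensional estimate $\int_\R|\widehat{f_i}|^{1/\gamma_i}\,dt\le c\,\|f_i\|_\infty^{1/\gamma_i-1}$, produces
\begin{equation}
  \pi_E(f)(0)\le C^{n-k}\prod_{i=1}^n\|f_i\|_\infty^{\gamma_i},
\end{equation}
with $C$ the one-dimensional constant; optimizing the choice of $p_i$ (either $p_i=2$ uniformly or $p_i$ chosen as in Ball's sharp computation) yields the two competing constants, and taking the minimum gives the stated bound.

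The main obstacle I anticipate is establishing the sharp one-dimensional functional inequality $\int_\R |\widehat g|^p\le c_p\|g\|_\infty^{p-1}$ with the correct constant and verifying that equality-type behavior matches $g=\mathds 1_{[-1/2,1/2]}$; for the $2^{k/2}$ branch this requires replicating Ball's delicate estimate for $\int_\R |\mathrm{sinc}|^p$ together with the observation that an arbitrary density bounded by $1$ cannot do worse than the indicator of an interval (a Rogozin-type rearrangement step in one dimension). A secondary technical point is justifying the Fourier-inversion identity for the marginal when the $f_i$ are merely integrable and bounded (not necessarily smooth or compactly supported); this is handled by a routine approximation/mollification argument, possibly combined with a reduction to compactly supported $f_i$ via truncation, since the quantities involved are monotone under the relevant limits. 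Once these one-dimensional ingredients and the measurability/integrability technicalities are in place, the Brascamp–Lieb step is a direct application and the combinatorics of the $\gamma_i$ is bookkeeping.
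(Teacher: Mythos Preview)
Your plan conflates two distinct routes, and the one you need for the sharp $2^{k/2}$ bound has a real gap. Fourier inversion produces an integral over $E$, so Brascamp--Lieb must be applied there with the vectors $P_Ee_i$ and weights $a_i^2=|P_Ee_i|^2$ summing to $k$, not along $\mathrm{Id}_{E^\perp}$ as you write; and your one-dimensional input should read $\int_\R|\widehat g|^p\le c_p\,\|g\|_\infty$, not $c_p\,\|g\|_\infty^{\,p-1}$ (rescale $g\mapsto\lambda g(\lambda\,\cdot)$ to see the correct homogeneity). More importantly, after Brascamp--Lieb you still need Ball's estimate $\tfrac{1}{\pi}\int|\tfrac{\sin t}{t}|^p\,dt\le\sqrt{2/p}$, which requires $p=1/a_i^2\ge2$, i.e.\ every $|P_Ee_i|\le1/\sqrt2$. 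When some $|P_Ee_i|>1/\sqrt2$ this step fails; Ball and the paper handle that case by a separate induction (project out the offending coordinate and drop to dimension $n-1$), which your plan does not mention. Finally, the one-dimensional extremality you flag as the main obstacle---that $\int|\widehat g|^p$ is maximised among densities bounded by $1$ by $g=\mathds{1}_{[-1/2,1/2]}$---can indeed be proved, but the proof goes through symmetric decreasing rearrangement and the layer-cake formula (Minkowski's integral inequality plus Jensen on the level-set lengths), so you have relocated rather than avoided the paper's central mechanism.

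The paper instead applies Brascamp--Lieb--Luttinger rearrangement and the layer-cake formula at the $n$-dimensional level, reducing $\pi_E(f)(0)$ to an average over $t\in\prod_i[0,\|f_i\|_\infty]$ of box sections $|B(t)\cap E^\perp|$, and then invokes uniform box-slicing bounds (extensions of Ball's cube estimates to arbitrary coordinate boxes) which already package both the Fourier/sinc analysis and the inductive case. One genuine simplification is, however, buried in your sketch: for the constant $(n/(n-k))^{(n-k)/2}$ one can skip rearrangement entirely by applying Brascamp--Lieb directly to $\pi_E(f)(0)=\int_{E^\perp}\prod_i f_i(\langle y,P_{E^\perp}e_i\rangle)\,dy$ and then using the elementary bound $\int f_i^{1/b_i^2}\le\|f_i\|_\infty^{1/b_i^2-1}$ with $b_i=|P_{E^\perp}e_i|$; this already yields $\gamma_i=1-b_i^2=|P_Ee_i|^2$ and the stated bound, more directly than the paper's route for that half of the minimum.
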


In particular, the theorem implies that if $f\in \mathcal{F}_n$ and
$E\in G_{n,k}$, then
  \begin{equation}
    \label{eqn:consequence}
    \norm{\pi_E(f)}_{L^{\infty}(E)}\leq
    \min\left(\left(\frac{n}{n-k}\right)^{\frac{n-k}{2}},2^{k/2}\right).
  \end{equation}

As noted in \cite{Ball_GAFA}, if $f=\mathds{1}_{Q_n}$, the bound
$\left(\frac{n}{n-k}\right)^{(n-k)/2}$ is achieved when $n-k$ divides
$n$ and $E_0\in G_{n,k}$ is chosen so that $Q_n\cap E_0^{\perp}$ is a
cube of suitable volume; note that
$\left(\frac{n}{n-k}\right)^{\frac{n-k}{2}}\leq e^{k/2}$. When $k\leq
n/2$, the bound $2^{k/2}$ is sharp when $Q_n\cap E_0^{\perp}$ is a box
of suitable volume. Thus for such $k$, Theorem \ref{main} implies
\begin{equation}
  \label{eqn:compare_cube}
  \sup_{E\in G_{n,k}} \norm{\pi_E(f)}_{L^{\infty}(E)} \leq \sup_{E\in
    G_{n,k}}\norm{\pi_E(\mathds{1}_{Q_n})}_{L^{\infty}(E)} \quad (f\in
  \mathcal{F}_n).
\end{equation}
In terms of random vectors, if $X\in \R^n$ is distributed according to
$f$, then the density of the orthogonal projection $P_EX$ of $X$ onto
$E$ is simply $\pi_E(f)$. Thus if $X$ has density $f\in \mathcal{F}_n$
and $Y$ has density $\mathds{1}_{Q_n}$, the density of $P_EX$ is
uniformly bounded above by the value of the density of $P_{E_0}Y$ at
the origin (with $E_0$ chosen as above). 

For another probabilistic consequence, note that \eqref{eqn:consequence}
implies the following small-ball probability: for each $z\in E$,
\begin{equation}
  \mathbb{P}\left(\norm{P_E X -z}\leq \varepsilon \sqrt{k}\right)\leq
  (C\sqrt{2e\pi}\varepsilon)^k \quad (\varepsilon >0),
\end{equation}
where $\norm{\cdot}$ denotes the Euclidean norm; this was part of the
motivation in \cite{RV_IMRN}; see also Tikhomirov \cite{Tikhomirov}
for recent work on such estimates and their use in random matrix
theory.


Ball's approach to cube slicing \cite{Ball_PAMS}, \cite{Ball_GAFA} has
been adapted to a variety of related problems. We mention just a
sample and refer the reader to the references therein; see Barthe's
multidimensional version \cite{Barthe_IM} of the Brascamp-Lieb
inequality \cite{BL} and its normalized form; the use of the latter by
Gluskin \cite{Gluskin_GAFA} for slices of products of measurable sets;
Koldobsky and K\"{o}nig \cite{KK_measure} for problems involving
measures other than volume; Brzezinski \cite{Brzezinski} for recent
work on slices of products of Euclidean balls; Bobkov and Chistyakov
\cite{BC_12}, \cite{BC_15} for connections to sums of independent
random variables.

Recently, bounds for marginals of arbitrary bounded densities have
been found by S. Dann and the second and third-named authors
\cite{DPP}. They obtain extremal inequalities for certain averages,
e.g., for any $k$ and $f:\R^n\rightarrow \R^+$ satisfying
$\norm{f}_{L^{\infty}(\R^n)}\leq 1 = \norm{f}_{L^1(\R^n)}$ and
$f(0)=\norm{f}_{L^{\infty}(\R^n)}$, one has
\begin{equation}
  \label{eqn:DPP}
  \int_{G_{n,k}}\pi_E(f)(0)^n d\mu_{n,k}(E) \leq
  \int_{G_{n,k}}\pi_E(\mathds{1}_{D_n})(0)^nd\mu_{n,k}(E),
\end{equation}
where $\mu_{n,k}$ is the Haar probability measure on $G_{n,k}$ and
$D_n$ is the Euclidean ball in $\R^n$ of volume one centered at the
origin.

Using an idea from the proof of Theorem \ref{main}, we also obtain the
following strengthening of \eqref{eqn:DPP} within the class
$\mathcal{F}_n$.

\begin{proposition}
  \label{prop:avg}
  Let $1\leq k < n$ and $f\in \mathcal{F}_n$. Then
  \begin{equation}
    \label{eqn:avg}
    \int_{G_{n,k}}\pi_E(f)(0)^n d\mu_{n,k}(E) \leq
    \int_{G_{n,k}}\pi_E(\mathds{1}_{Q_n})(0)^n d\mu_{n,k}(E).
  \end{equation}
\end{proposition}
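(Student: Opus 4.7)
The plan is to decompose $f$ via a layer-cake representation into indicators of product sets, reduce each slice to a centered box using the Brascamp--Lieb--Luttinger rearrangement inequality, invoke Grinberg's affine-invariance identity for the dual affine quermassintegral, and combine these through Minkowski's integral inequality and a one-dimensional Jensen step.

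\emph{Layer cake.} Set $A^i_s := \{f_i > s\}$ and $L_i(s) := |A^i_s|$. Since $\|f_i\|_\infty \le 1$, Fubini gives $f(x) = \int \mathds{1}_{A(\vec s)}(x)\, d\vec s$, where $\vec s$ ranges over $\prod_i [0, \|f_i\|_\infty]$ and $A(\vec s) := \prod_i A^i_{s_i}$, so
\[
\pi_E(f)(0) = \int |A(\vec s) \cap E^\perp|_{n-k}\, d\vec s.
\]

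\emph{Minkowski, rearrangement, and Grinberg.} Applying Minkowski's integral inequality with $L^n(d\mu_{n,k})$ and moving the norm inside,
\[
\Bigl(\int_{G_{n,k}} \pi_E(f)(0)^n\, d\mu_{n,k}\Bigr)^{1/n} \le \int \Bigl(\int_{G_{n,k}} |A(\vec s) \cap E^\perp|_{n-k}^n\, d\mu_{n,k}\Bigr)^{1/n} d\vec s.
\]
Representing $|A(\vec s)\cap E^\perp|_{n-k}$ as $\int_{\mathbb R^{n-k}}\prod_i \mathds{1}_{A^i_{s_i}}(\langle a_i,t\rangle)\,dt$, with $a_i$ the rows of an isometric parametrization of $E^\perp$, the Brascamp--Lieb--Luttinger rearrangement inequality yields the pointwise comparison
\[
|A(\vec s)\cap E^\perp|_{n-k} \le |B(\vec s)\cap E^\perp|_{n-k},
\]
where $B(\vec s) := \prod_i [-L_i(s_i)/2,\, L_i(s_i)/2]$ is the centered box with the same side lengths. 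Since $B(\vec s) = D(\vec s) Q_n$ with $D(\vec s) := \mathrm{diag}(L_1(s_1),\ldots,L_n(s_n))$, Grinberg's identity for the dual affine quermassintegral supplies
\[
\int_{G_{n,k}} |B(\vec s)\cap E^\perp|_{n-k}^n\, d\mu_{n,k} = \Bigl(\prod_i L_i(s_i)\Bigr)^{n-k}\int_{G_{n,k}} \pi_E(\mathds{1}_{Q_n})(0)^n\, d\mu_{n,k}.
\]

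\emph{Jensen and conclusion.} Collecting the three displays, it suffices to verify $\prod_i \int_0^{\|f_i\|_\infty} L_i(s)^{(n-k)/n}\,ds \le 1$. The crucial point is that $\int_0^{\|f_i\|_\infty} L_i(s)\,ds = \|f_i\|_1 = 1$, so Jensen's inequality on $[0,\|f_i\|_\infty]$ with the probability measure $ds/\|f_i\|_\infty$ and the concave function $x\mapsto x^{(n-k)/n}$ gives
\[
\int_0^{\|f_i\|_\infty} L_i(s)^{(n-k)/n}\,ds \le \|f_i\|_\infty \cdot \|f_i\|_\infty^{-(n-k)/n} = \|f_i\|_\infty^{k/n} \le 1.
\]
Taking the product over $i$ and raising to the $n$-th power yields the inequality, with the slightly sharper constant $\prod_i \|f_i\|_\infty^k$ on the right. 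The main technical input is Grinberg's identity; the coordination between its exponent $n-k$ and the exponent $(n-k)/n$ emerging after Minkowski lands precisely in the concavity regime where Jensen against the normalized layer-cake measure closes the gap.
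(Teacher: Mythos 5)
Your argument is correct and follows essentially the same strategy as the paper: layer-cake decomposition, the Brascamp--Lieb--Luttinger rearrangement to pass to centered boxes, Grinberg's invariance for the dual affine quermassintegral, and a convexity step to close the loop (you use Minkowski's integral inequality plus Jensen, while the paper normalizes the box by $\abs{B(t)}^{1/n}$ and applies H\"older twice; these are interchangeable). The only other cosmetic difference is the order of operations --- you layer-cake first and rearrange each slice, the paper rearranges the $f_i$ first and then applies the layer-cake representation --- but both yield the identical estimate $\pi_E(f)(0)\leq\int\abs{B(\vec s)\cap E^{\perp}}\,d\vec s$.
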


The latter can be seen as a type of ``average'' domination of
marginals of $\mathds{1}_{Q_n}$ over those of $f\in
\mathcal{F}_n$. This complements the pointwise domination of Rogozin's
Theorem \eqref{eqn:Rogozin} when $k=1$ and the worst-case comparison
in \eqref{eqn:compare_cube} when $k\leq n/2$ or $n-k$ divides $n$. As
in \cite{DPP}, inequality \eqref{eqn:avg} is another step in extending
results about dual affine quermassintegrals (we recall the definition
in \S \ref{section:proofs}) from convex sets to functions in order to
quantify characteristics of high-dimensional probability measures.

\section{Preliminaries}

The setting is Euclidean space $\R^n$ with the standard basis
$\{e_1,\ldots,e_n\}$, usual inner product $\langle\cdot,\cdot\rangle$,
Euclidean norm $\norm{\cdot}$ and unit sphere $S^{n-1}$. We reserve
$\abs{\cdot}_k$ for $k$-dimensional Lebesgue measure; the subscript
$k$ will be omitted if the context is clear. We denote the positive
reals by $\R^{+}$.

For a Borel set $A\subset \R$ of finite Lebesgue measure, the
symmetric rearrangement $A^*$ of $A$ is the symmetric interval
$A^*=[-\abs{A}/2,\abs{A}/2]$. For an integrable function
$g:\R\rightarrow [0,\infty)$ its symmetric decreasing rearrangement
  $g^*$ is defined by
\begin{equation*}
g^*(x)=\int_0^{\infty} \1_{\{g>t\}^*}(x) dt.
\end{equation*}
This can be compared with the layer-cake representation of $g$:
\begin{equation}
  \label{eqn:layer}
  g(x)=\int_0^{\infty} \1_{\{g>t\}}(x) dt
  =\int_0^{\norm{g}_{L^{\infty}(\R)}}\1_{\{g>t\}}(x) dt.
\end{equation}
Then $g$ and $g^*$ are equimeasurable, i.e., $\abs{\{g>t\}}
=\abs{\{g^*>t\}}$ for each $t>0$. In particular, $\norm{g}_{L^p(\R)} =
\norm{g^*}_{L^p(\R)}$ for $1\leq p\leq \infty$.

\section{Adapting Ball's arguments}

We start with the following basic fact used in \cite{Ball_PAMS},
\cite{Ball_GAFA}, proved for completeness.

\begin{lemma}
  \label{lemma:project_like_ball}
  Let $b=(b_1,\ldots,b_n)\in S^{n-1}$ and let $A$ be a measurable
  subset of $b^{\perp}$ with $\mathop{\rm dim}(\mathop{\rm
    span}(A))=k\in \{1,\ldots,n-1\}$.  Then for each $1\leq i\leq n$,
  \begin{equation*}
  \abs{b_i}\abs{A}_k\leq \abs{P_i(A)}_k,
  \end{equation*}where
  $P_i=P_{e_i^{\perp}}$ is the orthogonal projection onto
  $e_i^{\perp}$.
\end{lemma}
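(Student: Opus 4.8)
The plan is to reduce the statement to the classical computation underlying Ball's cube-slicing estimate, namely the fact that a measurable set $A\subset b^{\perp}$ of full dimension $k$ projects orthogonally onto $e_i^{\perp}$ with a controlled loss of measure. First I would observe that without loss of generality $\operatorname{span}(A)$ is exactly a $k$-dimensional subspace $F$ of $b^{\perp}$, and that the projection $P_i$ restricted to $F$ is a linear map $F\to P_i(F)$; the content of the lemma is really a statement about the Jacobian of this restriction. Concretely, I would pick an orthonormal basis $u_1,\ldots,u_k$ of $F$ and write $P_i(A)$ as the image of $A$ under the linear map $T=P_i|_F$; then $|P_i(A)|_k=|\det(T^*T)|^{1/2}\,|A|_k$, so it suffices to prove the determinant (equivalently, the smallest-to-largest product of singular values, but here just the full Jacobian) is at least $|b_i|$.

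The key step is to identify that Jacobian explicitly. Since $F\subset b^{\perp}$, the vector $e_i$ decomposes as $e_i=e_i^{\parallel}+e_i^{\perp}$ relative to $F\oplus F^{\perp}$ inside $\mathbb{R}^n$, and $P_i$ kills the $e_i$-direction. The cleanest route is: the orthogonal complement of $P_i(F)$ inside $e_i^{\perp}$ has dimension $n-1-k$, and one checks that $\ker(T)=\{0\}$ precisely unless $e_i\in F$; excluding that degenerate case, $T:F\to P_i(F)$ is a linear isomorphism onto a $k$-dimensional space, and its Jacobian equals the cosine of the angle between $F$ and its image, which one computes to be exactly $|\langle b,e_i\rangle|=|b_i|$ up to the right normalization — here the point is that rotating $F$ by the reflection swapping $e_i$ and (a suitable multiple near) $b$ relates the two. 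I would make this precise by choosing coordinates so that $b$ and $e_i$ span a convenient $2$-plane: complete to an orthonormal basis adapted to $\operatorname{span}\{b,e_i\}$, write $e_i=b_i b + \sqrt{1-b_i^2}\,v$ with $v\perp b$, $v\in S^{n-1}$, and then $P_i$ acts on $b^{\perp}$ as the identity minus the rank-one piece in the $v$-direction scaled by $\sqrt{1-b_i^2}$ — its determinant on any $k$-subspace $F\subset b^{\perp}$ is at least the determinant on the whole of $b^{\perp}$ restricted appropriately, giving the factor $|b_i|$.

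Alternatively, and perhaps more transparently, I would argue by a volume/Fubini comparison: consider the prism $A+[0,1]e_i$ (or rather $A+[0,1]b$, being careful about which direction to extrude) and compare its volume computed two ways. Extruding $A$ in the direction $e_i$ and projecting gives a cylinder over $P_i(A)$, whose $(k{+}1)$-volume in $\operatorname{span}(F\cup\{e_i\})$ is $|P_i(A)|_k$ times the length of the projected segment, which is $1$; on the other hand this same solid contains (or is contained in) the slanted prism $A+[0,1]e_i$ whose base $A$ has area $|A|_k$ and whose ``height'' in the $e_i$-direction, measured orthogonally to $F$, picks up the factor $|b_i|$ because the component of $e_i$ orthogonal to $b^{\perp}\supset F$ has length $|b_i|$. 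Matching the two expressions yields $|b_i|\,|A|_k\le |P_i(A)|_k$. The main obstacle is bookkeeping: making sure the ambient subspace in which the $(k{+}1)$-dimensional volume is computed is chosen consistently on both sides, and handling the degenerate case $e_i\in\operatorname{span}(A)$ (where $P_i$ drops rank and the inequality holds trivially since then $b_i=0$). Neither obstacle is serious; once coordinates are fixed the computation is a one-line Jacobian evaluation.
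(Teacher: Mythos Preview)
Your first approach---reducing to the Jacobian of $T=P_i|_F$ and showing it is at least $|b_i|$---is essentially what the paper does: it writes $e_i=b_i b+\sqrt{1-b_i^2}\,v$ with $v\in b^{\perp}$, picks the orthonormal basis $v_1=v,\,v_2,\ldots$ of (a space containing) $F$, observes $P_i v_j=v_j$ for $j\geq 2$ and $\|P_i v_1\|=|b_i|$, and then passes to $A$ by cube approximation. One correction to your write-up: for a general $k$-dimensional $F\subset b^{\perp}$ the Jacobian is \emph{at least} $|b_i|$, not \emph{equal} to it (the singular values of $P_i|_{b^{\perp}}$ are $|b_i|,1,\ldots,1$, so the $k$-dimensional volume factor on any $k$-plane in $b^\perp$ is $\geq |b_i|$, with equality only when $v\in F$). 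Your text says ``exactly $|b_i|$'' in one place and ``at least'' in another; only the latter is correct, and it is all you need.

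Your second (prism/Fubini) approach is a genuinely different and clean alternative. The argument is: assuming $e_i\notin F$ (else $b_i=0$ and there is nothing to prove), the $(k{+}1)$-volume of the single solid $A+[0,1]e_i$ computed by fibering in the $e_i$-direction equals $|P_i(A)|_k$ (each fiber has length $1$), while computed as base-times-height it equals $|A|_k\cdot\operatorname{dist}(e_i,F)$; since $F\subset b^{\perp}$, the $b$-component of $e_i$ is orthogonal to $F$, so $\operatorname{dist}(e_i,F)\geq |\langle e_i,b\rangle|=|b_i|$. Note these are two evaluations of the \emph{same} volume, not a containment between two different solids as your wording (``contains (or is contained in)'') suggests---drop that phrasing. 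This route avoids the cube approximation entirely and makes the role of the inequality $\operatorname{dist}(e_i,F)\geq |b_i|$ transparent; the paper's explicit-basis computation, on the other hand, identifies the extremal direction $v_1$ and is closer to how the lemma is actually deployed in the inductive step of Proposition~3.5.
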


\begin{proof}
  We may assume that $P_i:b^{\perp}\rightarrow e_i^{\perp}$ is injective
  (otherwise $b_i=0$ and the inequality is trivial). We may also assume
  that $b\not = \pm e_i$ (otherwise equality holds). Let
  $v_1,\ldots,v_k$ be an orthonormal basis of $\mathop{\rm span}(A)$
  with $ v_1 =\frac{1}{\norm{P_ib}}e_i - \frac{b_i}{\norm{P_ib}} b $ and
  $v_2,\ldots,v_k$ orthogonal to both $e_i$ and $b$. Then $P_i(v_i)=v_i$
  for $i\geq 2$, and ${\norm{P_i (v_1)}}=|b_i|.$ Consider the
  $k$-dimensional cube $C=\prod_{i=1}^{k}[0,v_i]\subset \mathop{\rm
    span}(A)$. Then $P_i C$ is a $k$-dimensional box in $e_i^{\perp}$
  with the sides $\abs{b_i},1,...,1$. Hence $|P_i(C)|_{k}=
  \abs{b_i}\abs{C}_{k}$.  Thus the lemma is true for coordinate cubes in
  $\mathop{\rm span}(A)$.  The inequality follows by approximating $A$
  by disjoint cubes. Since $P_i|_{b^{\perp}}$ is injective, the images
  of such cubes under $P_i$ remain disjoint.
\end{proof}

The first ingredient in Ball's approach is the following integral
inequality \cite{Ball_PAMS}.

\begin{theorem}
  \label{thm:Ball1}
  For every $p\geq 2,$
  \begin{equation}
    \frac{1}{\pi}\int_{-\infty}^{\infty} \left|\frac{\sin t}{t}\right|^p dt
    \leq \sqrt{\frac{2}{p}}.
  \end{equation}
\end{theorem}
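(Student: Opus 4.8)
The guiding fact is that the inequality is an \emph{equality} at $p=2$: by Plancherel's theorem and the substitution $t\mapsto\pi t$,
\[
\frac1\pi\int_{-\infty}^{\infty}\Bigl(\frac{\sin t}{t}\Bigr)^{2}\,dt
=\int_{-\infty}^{\infty}\1_{[-1/2,1/2]}(x)^{2}\,dx=1=\sqrt{2/2}.
\]
Hence the estimate is attained at the left endpoint; in particular it cannot follow from a pointwise majorant of $|\sin t/t|$, since any integrable majorant of $(\sin t/t)^{2}$ with integral $\pi$ must coincide with $(\sin t/t)^{2}$. The proof must therefore use the oscillation of $\sin t$, and the natural vehicle is that $\bigl(\tfrac{\sin t}{t}\bigr)^{2}$ is, after rescaling, the Fourier transform of the tent $\Lambda=\1_{[-1/2,1/2]}\ast\1_{[-1/2,1/2]}$ --- equivalently, the density of $X_{1}+X_{2}$ with $X_{1},X_{2}$ independent and uniform on $[-\tfrac12,\tfrac12]$.

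I would first dispose of the even exponents $p=2m$. There $\bigl(\tfrac{\sin t}{t}\bigr)^{2m}=\widehat{\Lambda^{\ast m}}$, so Fourier inversion gives $\tfrac1\pi\int_{-\infty}^{\infty}\bigl(\tfrac{\sin t}{t}\bigr)^{2m}\,dt=g_{2m}(0)$, the value at the origin of the density $g_{2m}$ of $S_{2m}:=X_{1}+\dots+X_{2m}$ (normalised so $g_{2}(0)=1$); the claim becomes $g_{2m}(0)\le m^{-1/2}$, with equality when $m=1$. To see this I would track the scale-free quantity $a_{m}:=\sqrt m\,g_{2m}(0)=\sqrt m\,\|g_{m}\|_{2}^{2}$: the convolution recursion $g_{2m}=g_{2m-2}\ast\Lambda$ expresses $a_{m}/a_{m-1}$ through averages of $g_{2m-2}/g_{2m-2}(0)$ over $[-1,1]$, which one estimates using that $g_{2m-2}$ is log-concave with variance $(m-1)/6$; the local central limit theorem gives $\lim_{m}a_{m}=\sqrt{3/\pi}<1$, and since $a_{1}=1$ one wants to conclude $a_{m}\le1$ for all $m$. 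For general real $p\ge 2$ I would then interpolate: $p\mapsto\log\int_{-\infty}^{\infty}|\sin t/t|^{p}\,dt$ is convex (H\"older), so each $p$ is pinched between the two neighbouring even integers by a chord estimate.

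The main obstacle is the behaviour just above $p=2$. The chord estimate is \emph{not} enough there, because the target $\log\!\bigl(\pi\sqrt{2/p}\bigr)$ is itself convex, so the chord from $p=2$ overshoots it for $p$ slightly larger --- indeed the naive interpolation from the values at $p=2$ and $p=4$ already fails near $p=2.5$. To repair this one must also pin down the endpoint slope, i.e.\ prove the entropy-type inequality
\[
\frac1{2\pi}\int_{-\infty}^{\infty}\Bigl(\frac{\sin t}{t}\Bigr)^{2}\log\Bigl(\frac{\sin t}{t}\Bigr)^{2}\,dt\le-\frac14,
\]
and combine it with convexity and the (even, or explicitly computable) values of the integral at a few exponents near $2$ so as to cover the interval $[2,4]$. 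The recurring difficulty is that equality at $p=2$ forces every estimate near the endpoint to be lossless, so the sharp constant $\sqrt2$ readily degrades to a larger absolute constant; making it come out exactly $\sqrt2$ is where essentially all the work lies.
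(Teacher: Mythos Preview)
The paper does not prove this theorem; it is quoted as Ball's integral inequality from \cite{Ball_PAMS} and used as a black box. So there is no in-paper proof to compare your proposal against.

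Regarding your proposal on its own merits: what you have written is a thoughtful diagnosis of the difficulties, but not a proof. You correctly note that equality at $p=2$ rules out any single pointwise majorant, and that convex (H\"older) interpolation between even integers cannot close the gap near $p=2$ because the target $p\mapsto \log\bigl(\pi\sqrt{2/p}\bigr)$ is itself convex. Your proposed fix --- pin down the derivative at $p=2$ via the entropy inequality and then patch with explicit values at a few nearby exponents --- remains a programme rather than an argument: even granting $F(2)=G(2)$, $F'(2)\le G'(2)$, and a finite list $F(p_j)\le G(p_j)$, convexity of $F$ alone does not force $F\le G$ on all of $[2,4]$ when $G$ is convex too. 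Your closing sentence effectively concedes this.

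For context, Ball's original argument is a direct analytic estimate: he splits the line into a central region, on which $|\sin t/t|$ is dominated by the Gaussian $e^{-t^{2}/6}$, and a tail handled by $|\sin t/t|\le 1/|t|$, and then computes; the argument is short but delicate precisely because of the endpoint equality. A later and conceptually cleaner route, due to Nazarov and Podkorytov, proves a general lemma: if the distribution functions of two nonnegative functions cross exactly once, then their $L^{p}$ norms are ordered one way below the critical exponent and the other way above it. Verifying the single-crossing hypothesis for $|\sin t/t|$ against a suitably scaled Gaussian yields the inequality for all $p\ge 2$ in one stroke, with equality exactly at $p=2$. That single-crossing mechanism is exactly what replaces the fragile endpoint interpolation you are attempting.
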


The second ingredient is Ball's normalized form \cite{Ball_GAFA} of
the Brascamp-Lieb inequality \cite{BL}.

\begin{theorem}
  \label{thm:Ball2}
  Let $u_1,...,u_n$ be unit vectors in $\R^n,$ $m\geq n,$ and
  $c_1,...,c_m>0$ satisfying $\sum_1^m c_i u_i\otimes u_i=I_n.$ Then for
  integrable functions $f_1,...,f_m:\R\rightarrow [0,\infty),$
    \begin{equation}
    \int_{\R^n} \prod_{i=1}^m f_i(\langle u_i,x\rangle)^{c_i}dx\leq
    \prod_{i=1}^m \left(\int_{\R} f_i\right)^{c_i}.
    \end{equation}
    There is equality if
    the $f_i'$s are identical Gaussian densities.
\end{theorem}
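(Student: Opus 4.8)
The plan is to give Barthe's mass‑transportation proof of this ``geometric'' Brascamp--Lieb inequality (the argument behind the normalized form in \cite{Ball_GAFA}, \cite{Barthe_IM}). First I would reduce: by homogeneity one may assume $\int_\R f_i = 1$ for every $i$ --- replacing $f_i$ by $\lambda_i f_i$ scales both sides by $\prod_i \lambda_i^{c_i}$ --- and by approximation one may assume each $f_i$ is continuous and strictly positive. Fix the reference Gaussian density $\gamma(t) = e^{-\pi t^2}$ on $\R$, and for each $i$ let $S_i : \R \to \R$ be the increasing $C^1$ transport map pushing $f_i$ forward onto $\gamma$, so that $\gamma(S_i(t))\, S_i'(t) = f_i(t)$ with $S_i' > 0$. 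Assemble these into the map $\Xi : \R^n \to \R^n$,
$$ \Xi(x) = \sum_{i=1}^m c_i\, S_i(\langle x, u_i\rangle)\, u_i = \nabla \Psi(x), \qquad \Psi(x) := \sum_{i=1}^m c_i\, \Gamma_i(\langle x, u_i\rangle), \ \ \Gamma_i' = S_i. $$
Since $\sum_i c_i\, u_i \otimes u_i = I_n$ forces $\{u_i\}$ to span $\R^n$ while the weights $c_i S_i'$ are positive, $\Psi$ is strictly convex, hence $\Xi$ is injective, with $D\Xi(x) = \sum_i c_i\, S_i'(\langle x, u_i\rangle)\, u_i \otimes u_i \succ 0$; note that I will only need $\Xi(\R^n) \subseteq \R^n$, not surjectivity.

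The first ingredient I would establish is the Ball--Barthe determinant inequality: if $\sum_i c_i\, u_i \otimes u_i = I_n$ and $\lambda_i > 0$, then $\det\bigl(\sum_i c_i \lambda_i\, u_i \otimes u_i\bigr) \ge \prod_i \lambda_i^{c_i}$. I would prove this via Cauchy--Binet applied to $AA^T$, where $A$ is the $n \times m$ matrix with $i$‑th column $\sqrt{c_i \lambda_i}\, u_i$: this expands $\det(AA^T)$ as $\sum_{|T| = n} d_T \prod_{i \in T} \lambda_i$ with $d_T := \bigl(\prod_{i \in T} c_i\bigr) \det([u_i]_{i \in T})^2 \ge 0$, while Cauchy--Binet applied to the frame identity $\sum_i c_i\, u_i \otimes u_i = I_n$ itself gives $\sum_{|T|=n} d_T = 1$, and a rank‑one‑perturbation version of the same gives $\sum_{T \ni i} d_T = c_i$; weighted AM--GM over the probability weights $(d_T)$ then yields $\sum_T d_T \prod_{i\in T} \lambda_i \ge \prod_T\bigl(\prod_{i\in T}\lambda_i\bigr)^{d_T} = \prod_i \lambda_i^{c_i}$. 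Applying this with $\lambda_i = S_i'(\langle x, u_i\rangle)$ and using $S_i'(t) = f_i(t)/\gamma(S_i(t))$ gives
$$ \det D\Xi(x) \ \ge\ \prod_{i=1}^m S_i'(\langle x, u_i\rangle)^{c_i}\ =\ \frac{\prod_{i} f_i(\langle x, u_i\rangle)^{c_i}}{\prod_{i} \gamma\bigl(S_i(\langle x, u_i\rangle)\bigr)^{c_i}}. $$

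The second ingredient is a change of variables against $\gamma$. Because $\sum_i c_i\, u_i \otimes u_i = I_n$, one has $\int_{\R^n} \prod_i \gamma(\langle y, u_i\rangle)^{c_i}\, dy = \int_{\R^n} e^{-\pi \langle (\sum_i c_i u_i \otimes u_i)\, y,\, y\rangle}\, dy = \int_{\R^n} e^{-\pi |y|^2}\, dy = 1 = \prod_i\bigl(\int_\R \gamma\bigr)^{c_i}$. Since $\Xi$ is injective (hence a diffeomorphism onto the open set $\Xi(\R^n)$), the substitution $y = \Xi(x)$ gives $1 \ge \int_{\Xi(\R^n)} \prod_i \gamma(\langle y, u_i\rangle)^{c_i}\, dy = \int_{\R^n} \prod_i \gamma(\langle \Xi(x), u_i\rangle)^{c_i}\, \det D\Xi(x)\, dx$. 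It then remains to check the pointwise bound $\prod_i \gamma(\langle \Xi(x), u_i\rangle)^{c_i} \ge \prod_i \gamma\bigl(S_i(\langle x, u_i\rangle)\bigr)^{c_i}$, which (taking $-\tfrac{1}{\pi}\log$) is $\sum_i c_i \langle \Xi(x), u_i\rangle^2 \le \sum_i c_i\, S_i(\langle x, u_i\rangle)^2$: writing $\sigma_i := S_i(\langle x, u_i\rangle)$ so that $\Xi(x) = \sum_i c_i \sigma_i u_i$, the left‑hand side equals $|\Xi(x)|^2$ by the frame identity, while the weighted Cauchy--Schwarz inequality gives $|\Xi(x)|^2 = \sum_i c_i \sigma_i \langle \Xi(x), u_i\rangle \le \bigl(\sum_i c_i \sigma_i^2\bigr)^{1/2}\bigl(\sum_i c_i \langle \Xi(x), u_i\rangle^2\bigr)^{1/2} = \bigl(\sum_i c_i \sigma_i^2\bigr)^{1/2}\,|\Xi(x)|$. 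Chaining the three displayed facts,
$$ 1\ \ge\ \int_{\R^n} \prod_{i=1}^m \gamma\bigl(S_i(\langle x, u_i\rangle)\bigr)^{c_i}\, \det D\Xi(x)\, dx\ \ge\ \int_{\R^n} \prod_{i=1}^m f_i(\langle x, u_i\rangle)^{c_i}\, dx, $$
which is the claim after undoing the normalization. For the equality statement: if all the $f_i$ are a common Gaussian, then (up to the scaling absorbed in the normalization) $S_i = \mathrm{id}$, so $\Xi = \nabla\Psi$ is the identity map, $\det D\Xi \equiv 1$, and every inequality above is an equality.

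The hard part will be the determinant inequality --- more precisely the combinatorial identity $\sum_{T \ni i} d_T = c_i$ underlying it, equivalently the fact that the diagonal entries of the rank‑$n$ orthogonal projection $A_0^T A_0$ (with $A_0$ the matrix of columns $\sqrt{c_i}\, u_i$) are the $c_i$ --- together with the measure‑theoretic bookkeeping needed to define the transport maps and to justify the change of variables under the bare hypothesis that the $f_i$ are merely integrable; the latter I would handle by a routine approximation (mollify and perturb each $f_i$ by a small multiple of $\gamma$ to get smooth, strictly positive densities converging to $f_i$ in $L^1$, prove the inequality there, and pass to the limit using Fatou's lemma on the left side).
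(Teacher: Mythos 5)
Your proposal is correct, but note that the paper does not prove this statement at all: Theorem \ref{thm:Ball2} is quoted as a known black box (Ball's normalized form of the Brascamp--Lieb inequality, cited to \cite{Ball_GAFA} and \cite{BL}), so there is no in-paper argument to compare against. What you have written out is Barthe's mass-transportation proof, and all the key steps check out: the Ball--Barthe determinant lemma via Cauchy--Binet (the identity $\sum_{T\ni i}d_T=c_i$ does hold, most cleanly seen by differentiating $\det\bigl(\sum_j c_j\lambda_j u_j\otimes u_j\bigr)$ in $\lambda_i$ at $\lambda=\mathbf{1}$, which gives $c_i\norm{u_i}^2=c_i$ --- this is where the hypothesis that the $u_i$ are unit vectors enters); the injectivity of $\Xi=\nabla\Psi$ from strict convexity (positive definiteness of $D\Xi$ uses that $\sum_i c_i\langle v,u_i\rangle^2=\norm{v}^2$); the weighted Cauchy--Schwarz step giving $\sum_i c_i\langle\Xi(x),u_i\rangle^2\leq\sum_i c_i S_i(\langle x,u_i\rangle)^2$; and the chaining. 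The approximation/Fatou reduction to smooth positive densities is standard and adequately described. Two small points worth making explicit if you write this up: the equality assertion for an arbitrary common Gaussian (not just the normalized one) uses $\sum_i c_i=\mathrm{tr}(I_n)=n$ so that the determinant lemma is an equality at constant $\lambda_i$, and surjectivity of $\Xi$ in that case so the restriction to $\Xi(\R^n)$ loses nothing. Historically, Ball obtained the normalized form as a consequence of the original Brascamp--Lieb inequality (whose proof in \cite{BL} goes through rearrangements and Gaussian maximizers), whereas your route is self-contained and also yields the reverse inequality by transporting in the opposite direction; either is a legitimate proof of the stated theorem.
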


We will also use the following standard fact, proved for the
convenience of the reader.

\begin{lemma}
  \label{lemma:marginal_id}
  Let $1\leq k<n$ and $E\in G_{n,k}$. Then there exist vectors
  $w_1,\ldots,w_n$ in $\R^{n-k}=\mathop{\rm
    span}\{e_1,\ldots,e_{n-k}\}$ such that $I_{n-k} = \sum_{i=1}^{n}
  w_{i}\otimes w_{i}$ and for any integrable function
  $f(x)=\prod_{i=1}^n f_i(x_i)$ with $f_i:\R\rightarrow [0,\infty)$,
  \begin{equation}
    \label{eqn:marginal_id}
    \pi_E(f)(0) = \int_{\mathbb R^{n-k}} \prod_{i=1}^{n} f_{i}
    (\langle y, w_{i}\rangle ) dy.
\end{equation}

\end{lemma}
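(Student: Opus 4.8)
The plan is to write $\pi_E(f)(0)$ as an integral over $E^\perp$ and then change variables so that the product structure of $f$ is expressed through linear forms on $\R^{n-k}$. Concretely, fix an orthonormal basis $u_1,\ldots,u_{n-k}$ of $E^\perp$ and identify $E^\perp$ with $\R^{n-k}=\mathop{\rm span}\{e_1,\ldots,e_{n-k}\}$ via the isometry sending $u_j\mapsto e_j$. Under this identification a point $z\in E^\perp$ with coordinates $y=(y_1,\ldots,y_{n-k})\in\R^{n-k}$ is $z=\sum_{j=1}^{n-k}y_j u_j$, and its $i$-th coordinate in $\R^n$ is $z_i=\langle z,e_i\rangle=\sum_{j=1}^{n-k}y_j\langle u_j,e_i\rangle=\langle y,w_i\rangle$, where I define $w_i:=(\langle u_1,e_i\rangle,\ldots,\langle u_{n-k},e_i\rangle)\in\R^{n-k}$, i.e.\ $w_i$ is the vector of coordinates of the orthogonal projection of $e_i$ onto $E^\perp$. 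Since the identification is an isometry, $dz=dy$, and by definition of the marginal density
\[
  \pi_E(f)(0)=\int_{E^\perp}f(z)\,dz=\int_{\R^{n-k}}\prod_{i=1}^n f_i(z_i)\,dy
  =\int_{\R^{n-k}}\prod_{i=1}^n f_i(\langle y,w_i\rangle)\,dy,
\]
which is exactly \eqref{eqn:marginal_id}.

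It remains to verify the decomposition of the identity $I_{n-k}=\sum_{i=1}^n w_i\otimes w_i$. Let $P=P_{E^\perp}$ be the orthogonal projection of $\R^n$ onto $E^\perp$. In the chosen basis, the $j$-th coordinate of $Pe_i$ is $\langle Pe_i,u_j\rangle=\langle e_i,u_j\rangle$, so $w_i$ is precisely the coordinate vector of $Pe_i$. For any $y\in\R^{n-k}$, corresponding to $z=\sum_j y_ju_j\in E^\perp$, we compute
\[
  \Bigl(\sum_{i=1}^n w_i\otimes w_i\Bigr)y
  =\sum_{i=1}^n \langle w_i,y\rangle\, w_i
  =\sum_{i=1}^n \langle e_i,z\rangle\, Pe_i
  =P\Bigl(\sum_{i=1}^n \langle e_i,z\rangle e_i\Bigr)
  =Pz=z,
\]
using $\sum_i\langle e_i,z\rangle e_i=z$ and $Pz=z$ since $z\in E^\perp$; in coordinates this says $\sum_{i=1}^n w_i\otimes w_i$ acts as the identity on $\R^{n-k}$, i.e.\ $\sum_{i=1}^n w_i\otimes w_i=I_{n-k}$.

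There is no real obstacle here: the only thing to be slightly careful about is bookkeeping between the two descriptions of $E^\perp$ (as a subspace of $\R^n$ and as a copy of $\R^{n-k}$) and checking that the change of variables is volume-preserving because it is an isometry; the identity resolution for $\{w_i\}$ then falls out of the fact that $\{e_i\}$ resolves the identity on $\R^n$ and $P$ is an orthogonal projection onto the relevant subspace. Note that the $w_i$ need not be unit vectors, which is why the statement is phrased with $w_i\otimes w_i$ rather than in the normalized form of Theorem \ref{thm:Ball2}; rescaling each $f_i$ appropriately will be done later when the lemma is applied.
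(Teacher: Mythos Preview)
Your proof is correct and follows essentially the same approach as the paper: both choose an orthonormal basis of $E^\perp$, define $w_i$ as the coordinate vector of $P_{E^\perp}e_i$ in that basis, and obtain the integral identity by parametrizing $E^\perp$ via this basis. The only cosmetic difference is in verifying $\sum_i w_i\otimes w_i=I_{n-k}$: the paper packages the computation as $\sum_i V^Te_ie_i^TV=V^T I_nV=I_{n-k}$ using the matrix $V$ with columns $u_1,\ldots,u_{n-k}$, whereas you phrase it via the projection identity $\sum_i\langle e_i,z\rangle Pe_i=Pz$; these are the same calculation.
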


\begin{proof}
Let $v_{1}, \ldots , v_{n-k}\in \R^n$ be an orthonormal basis of
$E^{\perp}$ and let $w_i$ be defined by
  \begin{equation*} 
    w_{i}:= (\langle v_{1} , e_{i}\rangle , \ldots , \langle v_{n-k},
    e_{i}\rangle ),\quad 1\leq i \leq n.
  \end{equation*}In matrix terms, if $V$ is the $n\times (n-k)$ matrix
  with columns $v_1,\ldots,v_{n-k}$, then $w_i= V^{T}e_i$, where $V^T$
  is the transpose of $V$.  Then
  \begin{equation*}
    \sum_{i=1}^n w_iw_i^{T} = \sum_{i=1}^n V^T e_i e_i^T V = I_{\R^{n-k}}
  \end{equation*}
  and 
  \begin{eqnarray*}
    \pi_E(f) (0) & = & \int_{E^{\perp}}f(y) dy  = 
    \int_{\R^{n-k}}f\left(\sum_{i=1}^{n-k} y_i v_i\right) dy \\ & = &
    \int_{\R^{n-k}} \prod_{i=1}^n f_i (\langle V y,e_i\rangle) dy  =
    \int_{\R^{n-k}} \prod_{i=1}^n f_i(\langle y,w_i \rangle) dy.
  \end{eqnarray*}
\end{proof}

The following two propositions extend Ball's estimates on slices of
the cube to coordinate boxes. It is essential that we obtain estimates
that are uniform among all such boxes. The proofs draw heavily on
\cite{Ball_GAFA}.

\begin{proposition}
  \label{prop:box_1}
  Let $1\leq k < n$ and $H\in G_{n,n-k}$. Then there exists
  $\{\beta_i\}_{i=1}^n\subset [0,1]$ with $\sum_{i=1}^n \beta_i=n-k$
  such that for any $z_1,\ldots,z_n\in \R^{+}$, the box
  $B=\prod_{i=1}^n[-z_i/2,z_i/2]$ satisfies
  \begin{equation}
    \abs{B\cap H}\leq \left(
    \frac{n}{n-k}\right)^{\frac{n-k}{2}}\prod_{i=1}^n z_i^{\beta_i}.
  \end{equation}
\end{proposition}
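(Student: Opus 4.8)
\emph{Proof proposal.}
The plan is to rewrite the section $|B\cap H|$ as a value of a marginal via Lemma~\ref{lemma:marginal_id}, reduce the resulting integral to the normalized Brascamp--Lieb inequality (Theorem~\ref{thm:Ball2}) using the fact that indicators are idempotent under taking powers, and finally optimize the constant by a short concavity (entropy) estimate.

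First I would set $E:=H^{\perp}\in G_{n,k}$ and note that, since $\mathds 1_B(x)=\prod_{i=1}^n\mathds 1_{[-z_i/2,z_i/2]}(x_i)$ is a product, Lemma~\ref{lemma:marginal_id} produces vectors $w_1,\dots,w_n\in\R^{n-k}$ with $\sum_{i=1}^n w_i\otimes w_i=I_{n-k}$ and
\[
 |B\cap H|_{n-k}=\pi_E(\mathds 1_B)(0)=\int_{\R^{n-k}}\prod_{i=1}^n \mathds 1_{[-z_i/2,\,z_i/2]}\bigl(\langle y,w_i\rangle\bigr)\,dy .
\]
From the construction of the $w_i$ (they are the coordinate vectors of $P_H e_i$ in an orthonormal basis of $H$) one has $\|w_i\|^2=\|P_H e_i\|^2\in[0,1]$ and $\sum_{i=1}^n\|w_i\|^2=\mathrm{tr}(P_H)=n-k$. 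Next I would discard the indices with $w_i=0$ (each contributes the factor $\mathds 1_{[-z_i/2,z_i/2]}(0)=1$), and for the remaining indices put $c_i:=\|w_i\|^2>0$, $u_i:=w_i/\|w_i\|$, and $g_i:=\mathds 1_{[-z_i/(2\|w_i\|),\,z_i/(2\|w_i\|)]}$; then $\sum c_i u_i\otimes u_i=I_{n-k}$, and since this has full rank at least $n-k$ of the $u_i$ survive. Because $g_i$ is $\{0,1\}$-valued one has $g_i^{c_i}=g_i$ pointwise, so the integrand equals $\prod_i g_i(\langle y,u_i\rangle)^{c_i}$, and Theorem~\ref{thm:Ball2} applied in $\R^{n-k}$ gives
\[
 |B\cap H|\le\prod_i\Bigl(\int_{\R} g_i\Bigr)^{c_i}=\prod_i\Bigl(\frac{z_i}{\|w_i\|}\Bigr)^{\|w_i\|^2}=\Bigl(\prod_{i=1}^n z_i^{\beta_i}\Bigr)\prod_i\|w_i\|^{-\|w_i\|^2},
\]
where $\beta_i:=\|w_i\|^2\in[0,1]$ and $\sum_i\beta_i=n-k$, as required for the statement.

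It then remains to bound the dimensional constant $\prod_i\|w_i\|^{-\|w_i\|^2}=\exp\bigl(-\tfrac12\sum_i c_i\ln c_i\bigr)$. Padding back the zero terms ($0\ln 0=0$), I would apply Jensen's inequality to the concave function $t\mapsto -t\ln t$ over $n$ summands: with $\sum_{i=1}^n c_i=n-k$,
\[
 -\sum_{i=1}^n c_i\ln c_i\ \le\ n\cdot\Bigl(-\tfrac{n-k}{n}\ln\tfrac{n-k}{n}\Bigr)\ =\ (n-k)\ln\frac{n}{n-k},
\]
the constraint $c_i\le1$ never being active since the maximizer $c_i=(n-k)/n$ already lies in $[0,1]$. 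Hence $\prod_i\|w_i\|^{-\|w_i\|^2}\le\bigl(\tfrac{n}{n-k}\bigr)^{(n-k)/2}$, which combined with the previous display completes the proof.

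The part requiring the most care is the bookkeeping around vanishing $w_i$: one must check that after discarding them Theorem~\ref{thm:Ball2} still applies (strictly positive coefficients, and at least as many vectors as the dimension $n-k$), and that the exponents $\beta_i$ read off from the Brascamp--Lieb bound genuinely equal $\|P_H e_i\|^2$, so that $\beta_i\in[0,1]$ and $\sum_i\beta_i=n-k$. Everything else is a direct substitution together with the one-line entropy estimate; in contrast with the hyperplane-section case, the idempotency $g_i^{c_i}=g_i$ of indicators removes any need for the integral inequality of Theorem~\ref{thm:Ball1}, so this is the easier of the two box estimates.
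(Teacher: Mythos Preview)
Your proof is correct and follows essentially the same route as the paper: express $|B\cap H|$ via Lemma~\ref{lemma:marginal_id}, apply the normalized Brascamp--Lieb inequality (Theorem~\ref{thm:Ball2}) with $c_i=\|w_i\|^2$ and the idempotency of indicators, then bound $\prod_i \|w_i\|^{-\|w_i\|^2}$ by $(n/(n-k))^{(n-k)/2}$. The only differences are cosmetic: you supply the entropy/Jensen argument for the constant explicitly (the paper cites Ball for it) and you are more careful about indices with $w_i=0$, which the paper leaves implicit.
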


\begin{proof}
  Let $w_1,\ldots,w_n$ be as in Lemma \ref{lemma:marginal_id} (with
  $k$ and $n-k$ interchanged).  For $i=1,\ldots,n$, let
  $u_i=w_i/\norm{w_i}$ and $a_i = \norm{w_i}$ and $\beta_i=a_i^2$.
  For $z_1,\ldots,z_n\in \R^{+}$, we apply \eqref{eqn:marginal_id}
  with $f=\mathds{1}_B$ and $E=H^{\perp}$ to get
  \begin{eqnarray*}
    \abs{B\cap H} &=& \pi_{H^{\perp}}(\mathds{1}_{B})(0)\\ & = & \int_{\R^k}
     \prod_{i=1}^{n} \1_{[-\frac{z_i}{2},\frac{z_i}{2}]}(\langle y,
     w_{i}\rangle ) dy\\ & = & \int_{\R^k} \prod_{i=1}^{n}
     \1_{[-\frac{z_i}{2a_i},\frac{z_i}{2a_i}]}(\langle y, u_i\rangle)
     dy.
   \end{eqnarray*} 
   Using Theorem \ref{thm:Ball2}, the latter is at most
   \begin{equation}\label{equation:estimate1-2}
     \prod_{i=1}^{n} \left( \int_{\R}
     \1_{[-\frac{z_i}{2a_i},\frac{z_i}{2a_i}]}(t)dt
     \right)^{a_i^2}=\prod_{i=1}^{n} \left( \frac{z_i}{a_i}
     \right)^{a_i^2}.
   \end{equation} 
   As in \cite[Proof of Proposition 4]{Ball_GAFA}, we use the bound
   $$\prod_{i=1}^{n} a_i ^{-a_i^2}\leq
   \left(\frac{n}{n-k}\right)^{\frac{n-k}{2}},$$ from which the lemma
   follows.
 \end{proof}

 \begin{proposition}
   \label{prop:box_2}
   Let $1\leq k \leq n/2$ and $H\in G_{n,n-k}$. Then there exists
   $\{\beta_j\}_{j=1}^n\subset [0,1]$ with $\sum_{i=1}^n \beta_i=n-k$
   such that for any $z_1,\ldots,z_n\in \R^{+}$, the box
   $B=\prod_{j=1}^n[-z_j/2,z_j/2]$ satisfies 
   \begin{equation} |B\cap H|\leq 2^{k/2}\prod_{j=1}^n z_j^{\beta_j}.
   \end{equation}
 \end{proposition}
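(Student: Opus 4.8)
The plan is to run Ball's Fourier-analytic argument from \cite{Ball_GAFA}, now tracking the dependence on the side lengths. Put $\mathrm{sinc}(s)=\sin s/s$. Fourier inversion gives $\widehat{\mathds{1}_B}(\zeta)=\prod_{i=1}^n z_i\,\mathrm{sinc}(z_i\zeta_i/2)$, together with the classical identity $|B\cap H|=(2\pi)^{-k}\int_{H^{\perp}}\widehat{\mathds{1}_B}(\zeta)\,d\zeta$ for central sections. Writing $P_{H^{\perp}}e_i=b_iv_i$ with $b_i=\|P_{H^{\perp}}e_i\|\in[0,1]$ and $v_i\in S^{n-1}\cap H^{\perp}$ (the indices with $b_i=0$ only contribute a constant factor $1$ below), we have $\sum_i b_i^2\,v_i\otimes v_i=I_{H^{\perp}}$, hence $\sum_i b_i^2=k$, and $\zeta_i=b_i\langle\zeta,v_i\rangle$ for $\zeta\in H^{\perp}$. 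Passing to absolute values,
\[
  |B\cap H|\le\frac{\prod_{i=1}^n z_i}{(2\pi)^{k}}\int_{H^{\perp}}\prod_{i:\,b_i\ne0}\Bigl|\mathrm{sinc}\Bigl(\tfrac{z_ib_i}{2}\langle\zeta,v_i\rangle\Bigr)\Bigr|\,d\zeta ,
\]
where $H^{\perp}\cong\R^{k}$. I would then distinguish two cases according to the sizes of the $b_i$.

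\emph{Good case: $b_i^2\le 1/2$ for every $i$.} Write each factor as $\bigl(|\mathrm{sinc}(\tfrac{z_ib_i}{2}\langle\zeta,v_i\rangle)|^{1/b_i^2}\bigr)^{b_i^2}$ and apply the normalized Brascamp--Lieb inequality (Theorem \ref{thm:Ball2}) on $H^{\perp}$ with weights $c_i=b_i^2$ and unit vectors $v_i$; this applies since $\sum_i b_i^2\,v_i\otimes v_i=I_{H^{\perp}}$. The substitution $t=\tfrac{z_ib_i}{2}s$ turns each resulting one-dimensional integral $\int_{\R}|\mathrm{sinc}(\tfrac{z_ib_i}{2}s)|^{1/b_i^2}\,ds$ into $\tfrac{2}{z_ib_i}\int_{\R}|\sin t/t|^{1/b_i^2}\,dt$, which is at most $2\pi\sqrt2/z_i$ by Theorem \ref{thm:Ball1} (legitimate since $1/b_i^2\ge 2$). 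Collecting terms, the powers of $2\pi$ cancel the factor $(2\pi)^{-k}$, the leftover constant is $2^{(\sum_i b_i^2)/2}=2^{k/2}$, and the side lengths combine into $\prod_{i=1}^n z_i^{\,1-b_i^2}$. Hence the proposition holds in this case with $\beta_i=1-b_i^2\in[0,1]$, and $\sum_i\beta_i=n-k$.

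\emph{Bad case: $b_{i_0}^2>1/2$ for some $i_0$.} I expect this to be the main obstacle, since then $1/b_{i_0}^2<2$ and Theorem \ref{thm:Ball1} is not available for that coordinate. The remedy, following \cite{Ball_GAFA}, is to peel off the index $i_0$ and induct on the codimension $k$. Since $b_{i_0}\ne 0$, the coordinate projection $P_{i_0}=P_{e_{i_0}^{\perp}}$ is injective on $H$, so $H_1:=P_{i_0}(H)$ is a subspace of $e_{i_0}^{\perp}\cong\R^{n-1}$ of codimension $k-1$, and $P_{i_0}(B)=B_1:=\prod_{i\ne i_0}[-z_i/2,z_i/2]$. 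Applying Lemma \ref{lemma:project_like_ball} to $A:=B\cap H$ with the unit vector $b:=P_{H^{\perp}}e_{i_0}/b_{i_0}\in H^{\perp}$ — for which $A\subset b^{\perp}$, $\dim\mathop{\rm span}(A)=n-k$, and $\langle b,e_{i_0}\rangle=b_{i_0}$ — gives $b_{i_0}\,|B\cap H|\le|P_{i_0}(B\cap H)|$, and since $P_{i_0}(B\cap H)\subset P_{i_0}(B)\cap P_{i_0}(H)=B_1\cap H_1$ we obtain $b_{i_0}\,|B\cap H|\le|B_1\cap H_1|$. By the inductive hypothesis applied to $(B_1,H_1)$ in $\R^{n-1}$ (or, when $k=1$, directly, since then $H_1=e_{i_0}^{\perp}$ and $|B_1\cap H_1|=\prod_{i\ne i_0}z_i$) there are $\beta'_i\in[0,1]$ with $\sum_{i\ne i_0}\beta'_i=n-k$ and $|B_1\cap H_1|\le 2^{(k-1)/2}\prod_{i\ne i_0}z_i^{\beta'_i}$. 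As $1/b_{i_0}<\sqrt2$, this yields $|B\cap H|\le 2^{k/2}\prod_{i\ne i_0}z_i^{\beta'_i}$, so the proposition holds with $\beta_{i_0}:=0$ and $\beta_i:=\beta'_i$ for $i\ne i_0$. The recursion terminates: a nonzero $b_{i_0}$ forces $k\ge 1$, each bad step lowers $k$ by one, and $k\le n/2$ implies $k-1\le (n-1)/2$, so the hypothesis of the proposition is preserved; once no $b_i^2$ exceeds $1/2$ we land in the good case. (The good case itself needs no restriction on $k$, so $k\le n/2$ is only used to keep the induction within the stated range.)
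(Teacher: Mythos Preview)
Your proof is correct and follows essentially the same route as the paper's: the Fourier/Brascamp--Lieb/Ball-integral argument in the good case yielding $\beta_i=1-b_i^2$, and the inductive peeling of a ``bad'' coordinate via Lemma~\ref{lemma:project_like_ball} in the other case. The only cosmetic differences are that the paper first normalizes to $\abs{B}=1$ and, in the bad case, passes through the infinite cylinder $C=\{x:\abs{x_j}\le z_j/2,\ j\neq i_0\}$ before applying Lemma~\ref{lemma:project_like_ball}, whereas you apply the lemma directly to $A=B\cap H$; both lead to the same bound $\abs{B\cap H}\le b_{i_0}^{-1}\abs{B_1\cap H_1}$.
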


 \begin{proof}
   Assume first that all unit vectors $b=(b_1,\ldots,b_n)\in
   H^{\perp}$, satisfy $b_i\leq \frac{1}{\sqrt{2}}$ for each
   $i=1,\ldots,n$. Let $\widetilde{P}=P_{H^{\perp}}$ be the orthogonal
   projection onto $H^{\perp}$. For $i=1,\ldots,n$, let
   $u_i=\frac{\widetilde{P}e_i}{\norm{\widetilde{P}e_i}}$ and
   $a_i={||\widetilde{P}e_i||}$. Note that
   \begin{equation}
     \label{eqn:I_a}
     (i) \sum_{i=1}^n a_i^2 u_i\otimes u_i = I_{H^{\perp}}, \quad\quad (ii)
     \sum_{i=1}^n a_i^2 = k.
   \end{equation}
   By our assumption, all $a_i\leq \frac{1}{\sqrt{2}}$, since $a_i$ is
   the $i$-th coordinate of the unit vector $u_i$ in $H^{\perp}$.

   Assume for the time being that $z_1,\ldots,z_n$ are fixed and
   satisfy $\abs{B}=\prod_{j=1}^n z_j=1$. Let $X=(X_1,\ldots,X_n)$ be a random
   vector with density $\mathds{1}_B$ and $Y=(Y_1,\ldots,Y_n)$ be a
   random vector with density $\mathds{1}_{Q_n}$. The characteristic
   function $\Phi: H^{\perp}\rightarrow \R$ of $\widetilde{P}X$
   satisfies
   \begin{eqnarray*}
     \Phi(w)&=&\E \exp\left(i\langle w,\widetilde{P}X\rangle\right) \\& = &\E
     \exp\left(i\sum_{j=1}^n X_j a_j\langle w,u_j\rangle\right) \\ &=& \E
     \exp\left(i\sum_{j=1}^n Y_j z_j a_j\langle w,u_j\rangle\right)\\ & =
     &\prod_{j=1}^n \frac{2\sin\frac{1}{2}z_j a_j \langle
       w,u_j\rangle}{z_j a_j \langle w,u_j\rangle}.
   \end{eqnarray*}
   Since the marginal density $\pi_{H^{\perp}}(\mathds{1}_{B})$ is
   continuous, we can apply the Fourier inversion formula (e.g.,
   \cite[Theorem 9.5.4]{Dudley}) to obtain
   \begin{eqnarray*}
     \label{BLB}
     \abs{B\cap H}& = &\pi_{H^{\perp}}(\mathds{1}_B)(0)\\
     &=&\frac{1}{(2\pi)^k}\int_{H^{\perp}}
     \Phi(w)dw\\ & = &\frac{1}{\pi^k}\int_{H^{\perp}}\prod_{j=1}^n
     \frac{\sin z_j a_j \langle w,u_j\rangle}{z_j a_j \langle
       w,u_j\rangle} dw\\ &\leq
     &\frac{1}{\pi^k}\int_{H^{\perp}}\prod_{j=1}^n \left|\frac{\sin z_j
       a_j \langle w,u_j\rangle}{z_j a_j \langle w,u_j\rangle}
     \right|dw\\ &= &\frac{1}{\pi^k}\int_{H^{\perp}}\prod_{j=1}^n
     \Phi_j(\langle w,u_j\rangle)dw,
   \end{eqnarray*}where $\Phi_j:\R\rightarrow [0,\infty)$ is defined by
     $\Phi_j(t)=\left|\frac{\sin z_j a_j t}{z_j a_j t}\right|.$
     Consequently, Theorem \ref{thm:Ball2} with $c_j=\frac{1}{a_j^2}$
     implies that $\abs{B\cap H}$ is at most
     \begin{equation}
       \label{almost_done}
       \frac{1}{\pi^k}\prod_{j=1}^n \left( \int_{\R}
       \Phi_j(t)^{\frac{1}{a_j^2}} dt\right)^{a_j^2}=\prod_{j=1}^n
       \left( \frac{1}{\pi}\int_{\R} \Phi_j(t)^{\frac{1}{a_j^2}}
       dt\right)^{a_j^2}
     \end{equation}
     (cf. (\ref{eqn:I_a})). Finally, we use Theorem \ref{thm:Ball1}:
     \begin{eqnarray*}
       \frac{1}{\pi}\int_{\R} \Phi_j(t)^{\frac{1}{a_j^2}} dt &=
       &\frac{1}{\pi}\int_{\R} \left|\frac{\sin z_j a_j t}{z_j a_j
         t}\right|^{\frac{1}{a_j^2}} dt\\ &=&\frac{1}{z_j
         a_j}\left(\frac{1}{\pi}\int_{\R} \left|\frac{\sin
         t}{t}\right|^{\frac{1}{a_j^2}} dt\right)\\&\leq &
       \frac{\sqrt{2}}{z_j}.
     \end{eqnarray*}
     In summary, for any $z_1,\ldots,z_n\in \R^{+}$ with
     $\abs{B}=\prod_{j=1}^n z_j=1$, we have
     \begin{equation}
       \label{equation:Ball}
       \abs{B\cap H}\leq
       \prod_{j=1}^n\left(\frac{\sqrt{2}}{z_j}\right)^{a_j^2}=2^{k/2}\prod_{j=1}^n
       z_j^{-a_j^2}.
 \end{equation}
 For an arbitrary box $B=\prod_{j=1}^n[-\frac{z_j}{2},\frac{z_j}{2}]$,
 we get via scaling that
  \begin{equation}\label{equation:Ball-1}
    \abs{B\cap H}\leq 2^{k/2}\prod_{j=1}^n z_j\prod_{j=1}^n
 z_j^{-a_j^2}=2^{k/2}\prod_{j=1}^n z_j^{\beta_j},
 \end{equation}
 where $\beta_j=1-a_j^2.$ Note that we assumed $a_j\leq
 \frac{1}{\sqrt{2}}$, so in fact $\beta_j\in[\frac{1}{2},1]$.

 Suppose now that there exists a unit vector $b\in H^{\perp}$ such
 that $b_i\geq \frac{1}{\sqrt{2}}$ for some $i$. By induction, assume
 the proposition is true for all dimensions at most $n-1$ and for all
 $k$. For $z_1,\ldots,z_n\in \R^{+}$, note that the cylinder
 $$C=\left\{x\in \R^n:\, \norm{x_j}\leq \frac{z_j}{2}\;\;\forall j\neq
 i\right\}$$ satisfies $\abs{B\cap H}\leq \abs{C\cap H}.$ By Lemma
 \ref{lemma:project_like_ball},
 \begin{equation}
   \label{equation:sqrt2}
   \abs{C\cap H}\leq\frac{1}{b_i}\abs{P_i(C\cap
     H)}\leq\sqrt{2}\abs{\widetilde{B}\cap \widetilde{H}},
 \end{equation}
 where $\widetilde{B}$ is an $(n-1)$ dimensional box with sides
 $\{z_j\}_{j\neq i}$ and $\widetilde{H}=P_i H$ is a $(k-1)$-codimensional
 subspace in $\R^{n-1}$. If $k=1$, then
 $$\abs{\widetilde{B}\cap
   \widetilde{H}}=\abs{\widetilde{B}}=\prod_{j\neq i} z_j,$$ and thus
 $$|B\cap H|\leq \sqrt{2}\prod_{j\neq i} z_j,$$ hence the proposition
 holds with $\beta_j=1$ for $j\not =i$. If $k\geq 2,$ we use the
 inductive hypothesis: there exists $\{\beta_j\}_{j\not =i} \subset
 [0,1]$ with $\sum_{j\not =i} \beta_j = n-1-(k-1)=n-k$ such that
 $$\abs{\widetilde{B}\cap \widetilde{H}}\leq 2^{(k-1)/2} \prod_{j\not
   =i} z_j^{\beta_j}.$$ Using (\ref{equation:sqrt2}), and taking
 $\beta_j=0$, we conclude that
 $$\abs{B\cap H}\leq 2^{k/2} \prod_{j \not = i } z_j^{\beta_j}.$$
 \end{proof}

 \section{Proofs of Theorem \ref{main} and Proposition \ref{prop:avg}}

 \label{section:proofs}

 \begin{proof}[Proof of Theorem \ref{main}]  
   By translating if necessary, we may assume that
   \begin{equation}
     \label{eqn:zero}
     \norm{\pi_E(f)}_{L^{\infty}(E)}= \pi_E(f)(0).
   \end{equation}
   For $i=1,...,n$, set $c_i=\norm{f_i}_{L^{\infty}(\R)}$ and consider
   the box $C=\prod_{i=1}^n[0,c_i].$ Let $w_1,\ldots,w_n$ be as in
   Lemma \ref{lemma:marginal_id}.  Using \eqref{eqn:marginal_id}, the
   rearrangement inequality of Rogers \cite{Rogers_JLMS} and
   Brascamp-Lieb-Luttinger \cite{BLL_JFA}, and the layer-cake
   representation \eqref{eqn:layer}, we have
  \begin{eqnarray*}
    \pi_E(f)(0)& = & \int_{\mathbb R^{n-k}} \prod_{i=1}^{n} f_{i}
    (\langle y, w_{i}\rangle ) dy \\ &\leq & \int_{\mathbb R^{n-k}}
    \prod_{i=1}^{n} f^{*}_{i} (\langle y, w_{i}\rangle ) dy\\ & = &
    \int_0^{{c_1}}\cdots \int_0^{{c_n}} \int_{\R^{n-k}}\prod_{i=1}^n
        {\1}_{\{f^*_i > t_i\} }(\langle x, w_i \rangle) dx dt_1\ldots
        dt_n.
  \end{eqnarray*}
  Write $dt=dt_1\ldots dt_n$ and
  $M=\left(\frac{n}{n-k}\right)^{\frac{n-k}{2}}$.  Since each $f^*_i$
  is symmetric and decreasing, the set $\{f^*_i>t_i\}$ is a symmetric
  interval. Consequently, we apply Proposition \ref{prop:box_1} with
  $z_i:=|\{f^*_i>t_i\}|$, $i=1,\ldots,n$, to get
  \begin{eqnarray*}
  \pi_E(f)(0) &\leq & M \int_{{C}} \prod_{i=1}^n
  |\{f^*_i>t_i\}|^{\beta_i} dt \\ & \leq & M { \prod_{i=1}^n
    c_i^{1-\beta_i}\cdot }
  \prod_{i=1}^n||f^*_i||_{L^1(\R)}^{\beta_i}\\ & \leq & M
       {\prod_{i=1}^n c_i^{1-\beta_i}} ;
  \end{eqnarray*}
  here we used Fubini's theorem, H\"{o}lder's inequality and the fact
  that
  \begin{equation}
    \label{eqn:layer*}
    \int_0^{{c_i}} |\{f^*_i>t_i\}|
    dt_i=\norm{f^*_i}_{L^1(\R)}=\norm{f_i}_{L^1(\R)}=1, \quad
    i=1,\ldots,n.
  \end{equation}
  Thus setting $\gamma_i=1-\beta_i$, $i=1,\ldots,n$, we obtain the
  first estimate in the theorem.  Repeating the latter argument with
  $M=2^{k/2}$, using Proposition \ref{prop:box_2}, concludes the proof
  of the theorem.
\end{proof}

Before proving Proposition \ref{prop:avg}, we recall the following
notion, proposed by Lutwak: if $K$ is a convex body in $\R^n$, and
$1\leq k < n$, the dual affine quermassintegrals of $K$ are defined by
\begin{equation}
  \widetilde{\Phi}_{k}(K) =
  \frac{\omega_n}{\omega_k}\left(\int_{G_{n,k}}\abs{K\cap E}^n
  d\mu_{n,k}(E)\right)^{\frac{1}{n}},
\end{equation}
where $\omega_n$ is the volume of the Euclidean ball in $\R^n$ of
radius one; see \cite{Lutwak_PAMS}, \cite{Lutwak_AM} for further
background. Grinberg \cite{Grinberg} proved that
\begin{equation}
  \label{eqn:Grinberg}
  \widetilde{\Phi}_{k}(K)=\widetilde{\Phi}_{k}(SK)
\end{equation}
for each volume-preserving linear transformation $S$; see \cite{DPP}
for a generalization of the latter invariance property for functions. 

\begin{proof}[Proof of Proposition \ref{prop:avg}]
  Let $w_1,\ldots,w_n$ be as in Lemma \ref{lemma:marginal_id}. As in
  the proof of Theorem \ref{main},
\begin{eqnarray*}
 \pi_E(f)(0)& \leq & \int_0^1\cdots \int_0^1
 \int_{\R^{n-k}}\prod_{i=1}^n {\bf 1}_{\{f^*_i > t_i\} }(\langle x,
 w_i \rangle) dx dt_1\ldots dt_n \\ &= & \int_{[0,1]^n} \abs{B(t)\cap E^{\perp}}
 dt,
\end{eqnarray*}
where $B(t)$ is the origin-symmetric box with side-lengths
$\abs{\{f^*_i\geq t_i\}}$, $i=1,\ldots,n$.  Thus
\begin{eqnarray*}
  \lefteqn{\int_{G_{n,k}} \pi_E(f)(0)^n
 d\mu_{n,k}(E)}\\& & \leq \int_{G_{n,k}}\left(\int_{[0,1]^n} \abs{B(t)\cap E^{\perp}}
 dt\right)^n d\mu_{n,k}(E)\\
  &  & = \int_{G_{n,k}}\left(\int_{[0,1]^n} \left(\prod_{j=1}^n
  \abs{\{f^*_j>t_j\}}\right)^{\frac{n-k}{n}} \abs{\widetilde{B}(t)\cap E^{\perp}}
  dt\right)^n d\mu_{n,k}(E),
\end{eqnarray*} where $\widetilde{B}(t) = B(t)/\abs{B(t)}^{1/n}$.
Using H\"{o}lder's inequality (twice), along with \eqref{eqn:layer*}
and \eqref{eqn:Grinberg}, we get
\begin{eqnarray*}
\int_{G_{n,k}} \pi_E(f)(0)^n d\mu_{n,k}(E) &\leq&
\int_{[0,1]^n}\int_{G_{n,k}} \abs{\widetilde{B}(t)\cap E^{\perp}}^n
d\mu_{n,k}(E) dt\\ &=& \int_{G_{n,k}} \abs{Q_n\cap E^{\perp}}^n
d\mu_{n,k}(E).
\end{eqnarray*}

\end{proof}

\section{Acknowledgements}

It is our pleasure to thank Alex Koldobsky for helpful discussions.
Galyna Livshyts would like to thank the University of Missouri and
Texas A\&M University for their hospitality during visits in which
this project started. Galyna Livshyts is supported by a Kent State
University Fellowship and US NSF grant 1101636 (to A. Zvavitch and
D. Ryabogin). Grigoris Paouris is supported by the A. Sloan
Foundation, US NSF grant CAREER-1151711 and BSF grant 2010288. This
work was also partially supported by a grant from the Simons
Foundation (\#317733 to Peter Pivovarov).  \small
\bibliographystyle{amsplain} \bibliography{LPPbib}

\medskip

\noindent
Galyna Livshyts: School of Mathematics, Georgia Institute of
Technology, Atlanta GA 30319, {\tt glivshyts6@math.gatech.edu}
\medskip

\noindent 
Grigoris Paouris: Department of Mathematics (Mailstop 3368), Texas
A\&M University, College Station, TX 77843-3368, {\tt grigoris@math.tamu.edu}
\medskip

\noindent 
Peter Pivovarov: Mathematics Department, University of Missouri,
Columbia, MO 65211, {\tt pivovarovp@missouri.edu}

\end{document}